\crefname{lemma}{Lemma}{Lemmas}
\crefname{theorem}{Theorem}{Theorems}
\def\@settitle{\begin{center}%
  \baselineskip14\p@\relax
  \bfseries
  \uppercasenonmath\@title
  \@title
  \ifx\@subtitle\@empty\else
     \\[1ex]\uppercasenonmath\@subtitle
     \footnotesize\mdseries\@subtitle
  \fi
  \end{center}%
}
\def\subtitle#1{\gdef\@subtitle{#1}}
\def\@subtitle{}
\newtheorem{theorem}{Theorem}[section]
\newtheorem{lemma}[theorem]{Lemma}
\theoremstyle{remark}
\newtheorem{remark}[theorem]{Remark}
\theoremstyle{remark}
\begin{document}

\title{Systoles of hyperbolic hybrids}

\author{Sami Douba}

\begin{abstract}
We exhibit closed hyperbolic manifolds with arbitrarily small systole in each dimension that are not quasi-arithmetic in the sense of Vinberg, and are thus not commensurable to those constructed by Agol, Belolipetsky--Thomson, and Bergeron--Haglund--Wise. This is done by taking hybrids of the manifolds constructed by the latter authors. 
\end{abstract}

\address{Institut des Hautes \'Etudes Scientifiques, 
Universit\'e Paris-Saclay, 35 route de Chartres, 91440 Bures-sur-Yvette, France}
\email{douba@ihes.fr}

\maketitle

Given a lattice $\Gamma$ in $\mathrm{Isom}(\mathbb{H}^n)$, the {\em systole} of the hyperbolic orbifold $M = \Gamma \backslash \mathbb{H}^n$ is the minimal length of a closed geodesic in $M$, i.e., the minimal translation length of a loxodromic element in $\Gamma$. Using the geometry of hyperbolic right-angled polygons, one can easily construct for each sufficiently small $\epsilon > 0$ a closed oriented hyperbolic surface of genus $2$ and systole precisely $\epsilon$. By Mostow rigidity, the topology of a closed hyperbolic $n$-manifold determines the systole as soon as $n\geq 3$; in particular, the systoles of such manifolds take on only countably many values. It is nevertheless true that for every $n \geq 3$ and every $\epsilon > 0$, there is a closed hyperbolic $n$-manifold of systole $< \epsilon$. For $n = 3$, this follows from Thurston's theory of hyperbolic Dehn filling.\footnote{In retrospect, there is a more elementary construction again involving hyperbolic right-angled polyhedra \cite{bogachevdouba}, though this also can ultimately be conceptualized as resulting from ``Dehn filling'' of Coxeter $3$-polyhedra.} 
No such tool is available in dimension~$4$, and it was in this dimension that Agol \cite{agol2006systoles} provided a strategy for producing closed hyperbolic manifolds of arbitrarily small systole by ``inbreeding'' arithmetic manifolds. The separability arguments required to implement Agol's strategy in every dimension were later supplied independently by Belolipetsky--Thomson \cite{MR2821431} and Bergeron--Haglund--Wise~\cite{bergeron2011hyperplane}. 

An interesting feature of the manifolds arising from this ``inbreeding'' construction is that they are all quasi-arithmetic in the sense of Vinberg \cite{MR207853}, as observed by Thomson \cite{MR3451458}. If $\Gamma < \mathrm{Isom}(\mathbb{H}^n)$ is a lattice, then $\Gamma$ is said to be {\em quasi-arithmetic} if there is a totally real number field $k \subset \mathbb{R}$ and a $k$-group ${\bf G}$ such that ${\bf G}(\mathbb{R})$ is isogenous to $\mathrm{Isom}(\mathbb{H}^n)$ with $\Gamma$ virtually contained in ${\bf G}(k)$ via this isogeny, but ${\bf G}^\sigma(\mathbb{R})$ is compact for every embedding $\sigma: k \rightarrow \mathbb{R}$ distinct from the inclusion $k \subset \mathbb{R}$. In this case, the adjoint trace field of $\Gamma$ coincides with $k$, and $\Gamma$ is arithmetic if and only if $\mathrm{trAd}(\gamma)$ is an algebraic integer for each $\gamma \in \Gamma$. 

The purpose of this note is to demonstrate the following. 

\begin{theorem}\label{main}
For each $n \geq 2$ and $\epsilon > 0$, there is a closed hyperbolic $n$-manifold of systole $< \epsilon$ that is not quasi-arithmetic.
\end{theorem}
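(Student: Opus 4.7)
My plan is to implement a Gromov--Piatetski-Shapiro style hybrid (``interbreeding'') of two quasi-arithmetic manifolds with \emph{distinct} adjoint trace fields, one of which is a small-systole manifold coming from the Agol/Belolipetsky--Thomson/Bergeron--Haglund--Wise construction. The hybrid inherits the small systole from the first piece, and fails to be quasi-arithmetic because it simultaneously ``sees'' two incompatible totally real number fields.

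More concretely, I would fix $n \geq 2$ and $\epsilon > 0$ and seek two closed hyperbolic $n$-manifolds $M_1, M_2$, each containing a separating embedded totally geodesic hypersurface $\Sigma_i \subset M_i$, such that $\Sigma_1$ and $\Sigma_2$ are isometric. To arrange this, I would build both $M_1, M_2$ as arithmetic/quasi-arithmetic manifolds of simplest type coming from quadratic forms $q_i$ of signature $(n,1)$ over distinct totally real fields $k_1 \neq k_2$, with $q_i$ admitting a common $(n-1,1)$ sub-form defined over $\mathbb{Q}$; this guarantees isometric totally geodesic hypersurfaces in both. Passing to suitable finite covers via the separability results of Bergeron--Haglund--Wise makes the $\Sigma_i$ embedded and two-sided. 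Within the commensurability class of $M_1$, one may further apply the inbreeding technique to arrange that $M_1$ has a closed geodesic $\gamma$ of length $< \epsilon$, disjoint from (and arbitrarily far away from) $\Sigma_1$, lying in one of the two components $M_1^+$ of $M_1 \setminus \Sigma_1$. I would then form the hybrid
\[ M \;=\; M_1^+ \cup_\Sigma M_2^+, \]
where $M_2^+$ is either component of $M_2 \setminus \Sigma_2$ and the gluing is by an isometry $\Sigma_1 \to \Sigma_2$.

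Two things then need to be checked. First, since $\gamma$ lies in the interior of $M_1^+$, which isometrically embeds in $M$, the closed geodesic $\gamma$ persists in $M$ with the same length, so $\mathrm{sys}(M) < \epsilon$. Second, the fundamental group of $M$ contains, up to conjugation, copies of the fundamental groups of $M_1^+$ and $M_2^+$, each Zariski-dense in $\mathrm{Isom}(\mathbb{H}^n)$; their adjoint trace fields are $k_1$ and $k_2$ respectively. If $M$ were quasi-arithmetic, defined by a $k$-group $\mathbf{G}$ compact at all non-identity real places of $k$, then $k$ would contain both $k_1$ and $k_2$, and restriction of scalars would exhibit the ``$k_i$-subgroups'' as Galois factors of $\mathbf{G}$; the fact that $M_i$ is only quasi-arithmetic over $k_i$ means there is some Galois embedding of $k_1 k_2$ at which the restriction of the $k_2$-factor is \emph{non}-compact, contradicting the quasi-arithmeticity hypothesis on $\mathbf{G}$.

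The main technical obstacle is the very first step: assembling, in each dimension $n \geq 2$ and for each desired trace field, a closed quasi-arithmetic $M_1$ that (a) admits the Agol--Belolipetsky--Thomson--Bergeron--Haglund--Wise inbreeding and (b) shares a totally geodesic hypersurface with a manifold $M_2$ defined over a different field $k_2$. This forces a choice of quadratic forms which are simultaneously flexible enough to support small-systole constructions (needing many commensurable copies of geodesic subspaces for the inbreeding step), yet rigid enough to possess a common $\mathbb{Q}$-rational $(n-1,1)$ sub-form. The non-quasi-arithmeticity argument in the last paragraph is the other delicate point, since one must rule out ``accidental'' quasi-arithmetic structure over a larger field; the cleanest way is to identify the adjoint trace field of $M$ explicitly as the compositum $k_1 k_2$ and run the Galois/compactness argument on that field.
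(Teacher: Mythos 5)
Your high-level strategy — hybridize quasi-arithmetic pieces, keep a short geodesic, and kill quasi-arithmeticity via an adjoint trace field clash — is exactly the spirit of the paper's proof, and the trace-field argument at the end (a Zariski-dense subgroup of a quasi-arithmetic lattice must share its adjoint trace field, so the hybrid's field is simultaneously forced to equal two incompatible things) is essentially what the paper cites from \cite{bdr}. But there is a genuine gap in the very first step of your construction: you cannot take two admissible forms $q_1, q_2$ over \emph{distinct} totally real fields $k_1 \neq k_2$ sharing a common $(n-1,1)$-sub-form $q'$ defined over $\mathbb{Q}$. For $\mathrm{O}'(q_i;\mathcal{O}_{k_i})$ to be a (cocompact) lattice, $q_i^\sigma$ must be definite for every nontrivial embedding $\sigma$ of $k_i$; but a $\mathbb{Q}$-rational sub-form $q'$ is fixed by every such $\sigma$, so $q_i^\sigma$ contains $q'$ as a signature-$(n-1,1)$ sub-form and is never definite. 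So these groups fail to be lattices at all, and even leaving that aside, the hypersurface stabilizers in $\Gamma_1$ and $\Gamma_2$ would have adjoint trace fields tied to $k_1$ and $k_2$ respectively, so they cannot be isometric (or even commensurable) when $k_1 \neq k_2$. The requirement that the hypersurfaces match up is in tension with the requirement that the ambient pieces live over different fields.

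The paper sidesteps this by putting \emph{both} forms over the \emph{same} field $k=\mathbb{Q}(\sqrt{2})$: it takes $f_1 = x_1^2 + x_2^2 + \cdots + x_n^2 - \sqrt{2}x_{n+1}^2$ and $f_2 = a x_1^2 + x_2^2 + \cdots + x_n^2 - \sqrt{2}x_{n+1}^2$ where $a\in\mathbb{Q}_{>0}$ is a non-square in $k$. The restrictions to $\{x_1=0\}$ are literally equal, so the same ideal $I\subset\mathbb{Z}[\sqrt{2}]$ yields identical hypersurfaces $\Sigma_1 \cong \Sigma_2$ and the gluing is automatic. The two forms are nonetheless \emph{not} $k$-similar (because $a\notin (k^\times)^2$), and the adjoint trace field of the hybrid jumps to $k(\sqrt a)\supsetneq k$; combined with the Zariski density of $\pi_1(N_1)\subset\Gamma_1$, this contradicts quasi-arithmeticity of $\pi_1(M)$ exactly as you envisioned, but without ever needing two different base fields. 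One further difference worth noting: the paper does the inbreeding in \emph{both} pieces, so the systolic geodesic is the double of an orthogeodesic that crosses all the gluing hypersurfaces, rather than a short geodesic confined to one side; this keeps the needed separability input symmetric in the two pieces and is cosmetically simpler than arranging, as you propose, a short closed geodesic far from the cutting hypersurface within a single piece (though that variant should also be workable).
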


It follows that the manifolds $M$ with small systole that we exhibit below are not commensurable to those constructed by Agol, Belolipetsky--Thomson, and Bergeron--Haglund--Wise. These $M$ are nevertheless pseudo-arithmetic in the sense of Emery and Mila \cite{MR4237964}, and hence we do not address the question of the latter authors as to whether every lattice in $\mathrm{Isom}(\mathbb{H}^n)$ for $n \geq 4$ is pseudo-arithmetic.

\begin{proof}[Proof~of~Theorem~\ref{main}]
Set $k = \mathbb{Q}(\sqrt{2})$, and let $a \in \mathbb{Q}_{>0}$ be a non-square in $k$. For $n \geq 2$, let $f_1$ and~$f_2$ be the quadratic forms in $n+1$ variables with coefficients in $k$ given by
\begin{alignat*}{4}
x_1^2 + x_2^2 + \ldots + x_n^2 - \sqrt{2}x_{n+1}^2, \\
ax_1^2+x_2^2 +\ldots +x_n^2 - \sqrt{2}x_{n+1}^2,
\end{alignat*}
respectively. For $i=1,2$, denote by $\mathbb{H}_{f_i}$ the $f_i$-hyperboloid model of $\mathbb{H}^n$, and by $\mathrm{O}'(f_i; \mathbb{R}) = \mathrm{Isom}(\mathbb{H}_{f_i})$ the index-$2$ subgroup of $\mathrm{O}(f_i; \mathbb{R})$ preserving $\mathbb{H}_{f_i}$. Let $A_i$ be the $1$-dimensional subgroup of $\mathrm{O}'(f_i; \mathbb{R})$ consisting of all matrices of the form
\begin{alignat*}{4}
\begin{pmatrix} 
* &   & * \\
& I_{n-1} & \\
* &   & *
\end{pmatrix}.
\end{alignat*}
Since the $k$-points of $A_i$ are dense in the identity component $A_i^\circ$ of $A_i$, we can find $g_i \in A_i^\circ$ with entries in $k$ such that the leading eigenvalue $\lambda_i$ of $g_i$ satisfies $\mathrm{log}(\lambda_i) < \epsilon/4$. Denote by $H_i \subset \mathbb{H}_{f_i}$ the geodesic hyperplane $\{x_1 = 0\}$. Belolipetsky and Thomson \cite{MR2821431} show that there is an ideal $I \subset \mathbb{Z}[\sqrt{2}]$ such that for $i=1,2$, the hyperplanes $H_i$ and $g_i H_i$ project to disjoint embedded $2$-sided totally geodesic hypersurfaces $\Sigma_i$ and $\Sigma_i'$, respectively, in the orbifold $M_i := \Gamma_i(I) \backslash \mathbb{H}_{f_i}$, where~$\Gamma_i(I)$ denotes the principal congruence subgroup of level $I$ in $\Gamma_i := \mathrm{O}'(f_i ; \mathbb{Z}[\sqrt{2}])$. Up to diminishing $I$, we can assume moreover that the~$\Gamma_i(I)$ are torsion-free, so that the~$M_i$ are manifolds, and that the projection~$\omega_i$ to $M_i$ of the orthogeodesic $\tilde{\omega}_i \subset \mathbb{H}_{f_i}$ joining the hyperplanes~$H_i$ and~$g_i H_i$ does not cross~$\Sigma_i$ or~$\Sigma_i'$. Let $N_i$ be the component containing~$\omega_i$ of the hyperbolic manifold with totally geodesic boundary obtained by cutting $M_i$ along~$\Sigma_i$ and $\Sigma_i'$. Identify~$\Sigma_i$ and $\Sigma_i'$ with the two boundary components of $N_i$ joined by $\omega_i$. Finally, let $M$ be the manifold obtained by first gluing the $N_i$ along the~$\Sigma_i$ via a canonical isometry in the language of Mila \cite{MR4230399}, so that the $\omega_i$ glue to an orthogeodesic $\omega$ joining the~$\Sigma_i'$ of length $\mathrm{log}(\lambda_1)+ \mathrm{log}(\lambda_2)$, and then doubling the resulting manifold along its boundary; see Figure \ref{fig:systolefigure}. Then~$M$ contains a closed geodesic of length $2(\mathrm{log}(\lambda_1)+ \mathrm{log}(\lambda_2)) < \epsilon$, namely, the double of $\omega$. To see that~$M$ is not quasi-arithmetic, one can for instance use the fact that a Zariski-dense subgroup of a quasi-arithmetic lattice has the same adjoint trace field as the lattice itself; see \cite[Cor.~2]{bdr}. In particular, since the $\pi_1(N_i)$ are Zariski-dense (see \cite[Lem.~1.7.A]{MR932135}), they have adjoint trace field $k$. On the other hand, as explained in \cite{MR4230399}, the adjoint trace field of $\pi_1(M)$ is $ k(\sqrt{a}) \neq k$. Since $\pi_1(N_1) \subset \pi_1(M)$, we conclude that $\pi_1(M)$ is not quasi-arithmetic.
\end{proof}

\begin{figure}[htbp]
    \centering
        \includegraphics[clip, trim=0.5cm 21cm 0.5cm 1cm, width=1.00\textwidth]{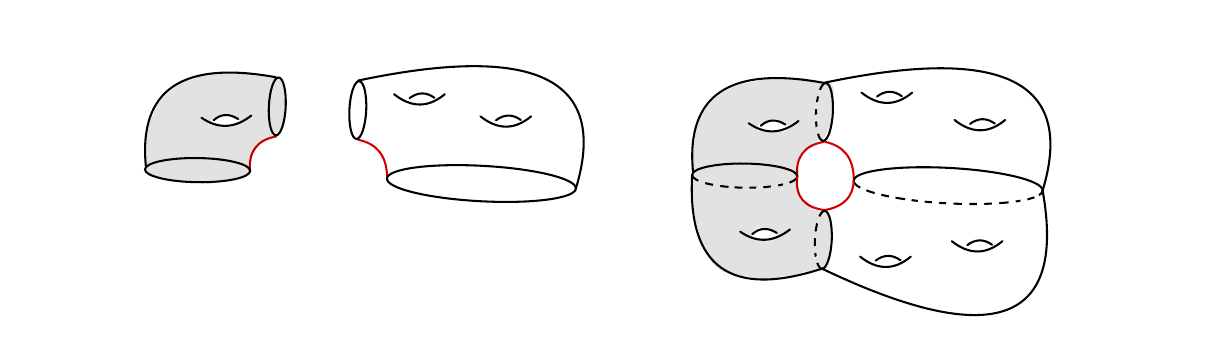}
    \caption{A schematic of the construction of the manifold $M$ in the proof of Theorem \ref{main}.}
    \label{fig:systolefigure}
\end{figure}

\begin{remark}\label{classicalhybrid}

We define a {\em classical hybrid} to be any closed hyperbolic orbifold $V$ obtained by gluing two (possibly disconnected) compact hyperbolic orbifolds $O_1$ and $O_2$ with totally geodesic boundary along their boundaries via isometries, such that each component of the double $2O_i$ of $O_i$ along $\partial O_i$ is arithmetic for $i=1,2$, and such that $2O_1$ is not commensurable to $2O_2$. Note that to say $2O_i$ is arithmetic is to say that there is an arithmetic lattice containing $\pi_1(O_i)$ and the reflections in the boundary components of the universal cover $\widetilde{O_i}$, so that the particular examples of nonarithmetic hyperbolic manifolds constructed by Gromov and Piatetski-Shapiro in \cite[2.8.C~and~2.9]{MR932135} are classical hybrids in the above sense.

For fixed $D$ and $\mu$, there are only finitely many monic integer polynomials of degree $\leq D$ and Mahler measure $\leq \mu$. Thus, for fixed $d$ and $n$, there is some $\epsilon_{d,n}$ such that any arithmetic hyperbolic $n$-orbifold with adjoint trace field of degree $\leq d$ has systole $\geq \epsilon_{d,n}$. It then follows that any classical hybrid of dimension~$n$ with adjoint trace field of degree $\leq d$ has systole $\geq \epsilon_{d,n}$. Indeed, let $\gamma$ be a closed geodesic in a classical hybrid $V$ as above. If $\gamma$ is contained entirely in $O_i$ for some $i=1,2$, then $\gamma$ has length $\geq \mathrm{sys}(2O_i) \geq \epsilon_{d,n}$. Otherwise,
\begin{alignat*}{4}
\mathrm{length}(\gamma) = \mathrm{length}(\gamma \cap O_1) + \mathrm{length}(\gamma \cap O_2) 
\geq \frac{1}{2} \mathrm{sys}(2O_1) +  \frac{1}{2} \mathrm{sys}(2O_2) 
\geq \epsilon_{d,n}.
\end{alignat*}

By \cite[Lem.~3.3]{MR3090707}, for $n \geq 3$, any $n$-orbifold commensurable to a classical hybrid is again a classical hybrid. Since all the manifolds~$M$ as in the proof of Theorem~\ref{main} share the same adjoint trace field $k(\sqrt{a})$, we obtain from the above discussion that for $n \geq 3$ and $\epsilon < \epsilon_{4,n}$, the output $n$-manifold $M$ is not commensurable to any classical hybrid in the above sense. 
\end{remark}

\begin{remark}
The approach in the proof of Theorem \ref{main} can also be used to construct a lattice in $\mathrm{O}'(n,1)$ for each $n \geq 2$ that is neither quasi-arithmetic nor has integral traces.\footnote{Note that, even having fixed the adjoint trace field, small systoles are not enough to guarantee nonintegral traces, since the ambient groups of the relevant lattices are not admissible.} For instance, if in the proof of Theorem \ref{main} we pick the $g_i$ so that~$\lambda_1\lambda_2$ is not an algebraic integer (regardless of how small the $\log(\lambda_i)$ are), then $\Gamma = \pi_1(M)$ has the desired property. Indeed, by an argument similar to \cite[Lem.~1.1]{MR3892248}, up to conjugation within $\mathrm{O}'(f_1 ; \mathbb{R})$, we have $\Gamma \subset \mathrm{O}'(f_1; K)$, where $K = k(\sqrt{a})$. Thus, if $\mathrm{tr}\mathrm{Ad} (g) \in \mathcal{O}_K$ for each $g \in \Gamma$, then the same is true of $\mathrm{tr}(g)$ for each $g \in \Gamma$ by \cite[Theorems~1~and~2~and~Lem.~2]{MR279206}. On the other hand, if $\gamma \in \Gamma$ is an element representing the double of $\omega$ in $M$, then since $\gamma$ is a pure hyperbolic element, we have that $\mathrm{tr}(\gamma)$ is a monic palindromic Laurent polynomial in $\lambda_1\lambda_2$ with integer coefficients, so that if $\mathrm{tr}(\gamma)$ is an algebraic integer, then the same is true of $\lambda_1\lambda_2$.

For instance, one can take $a = 3$ and
\begin{alignat*}{4}
g_1 &= \begin{pmatrix} 3+2\sqrt{2} & & 4 + 2\sqrt{2} \\ & I_{n-1} & \\ 2+2\sqrt{2} & & 3+2\sqrt{2} \end{pmatrix}, \\
g_2 &= \begin{pmatrix} \frac{11+6\sqrt{2}}{7} & & \frac{4+6\sqrt{2}}{7} \\ & I_{n-1} & \\ \frac{18+6\sqrt{2}}{7} & & \frac{11+6\sqrt{2}}{7}  \end{pmatrix}.
\end{alignat*}
\end{remark}

One can arrange in the proof of Theorem \ref{main} for the double of $\omega$ to be the shortest closed geodesic in $M$. Indeed, a similar approach yields the following (compare, for instance, \cite[Thm.~1.5]{MR3702548}). 

\begin{theorem}\label{shortest}
For every $n \geq 3$ and $m \in \mathbb{Z}_{>0}$, one can find $\epsilon_m < \frac{1}{m}$ and $m$ pairwise incommensurable closed hyperbolic $n$-manifolds of the same volume each possessing a unique shortest closed geodesic of length precisely $\epsilon_m$. 
\end{theorem}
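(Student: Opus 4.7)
The plan is to carry out the construction of Theorem~\ref{main} in parallel for $m$ choices of the auxiliary parameter $a$, and then to synchronize both the length of the shortest closed geodesic and the total volume across the outputs. Fix $k = \mathbb{Q}(\sqrt{2})$ and choose non-squares $a_1, \ldots, a_m \in \mathbb{Q}_{>0}$ such that the quadratic extensions $k(\sqrt{a_j})$ are pairwise distinct (e.g., distinct primes). For each $j$, applying the construction of Theorem~\ref{main} with $a = a_j$ produces a hybrid $M^{(j)}$ with adjoint trace field $k(\sqrt{a_j})$; since the adjoint trace field is a commensurability invariant, the $M^{(j)}$ are pairwise incommensurable.

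To ensure the doubled orthogeodesic $\omega^{(j)}$ is the unique shortest geodesic of $M^{(j)}$, exploit the uniform lower bound $\delta = \delta(k,n) > 0$ from Remark~\ref{classicalhybrid} on the systole of any arithmetic hyperbolic $n$-orbifold with adjoint trace field $k$. Choose the parameters $\lambda_i^{(j)}$ so that $\ell_j := 2(\log \lambda_1^{(j)} + \log \lambda_2^{(j)}) < \min(1/m, \delta/2)$. Any closed geodesic in $M^{(j)}$ either sits inside one arithmetic piece, and so has length $\geq \delta > \ell_j$, or crosses the totally geodesic hypersurfaces of the construction, and so has length $\geq 2\ell_j$ by a crossing-length bound analogous to that of Remark~\ref{classicalhybrid}; the sole exception is $\omega^{(j)}$ itself, of length $\ell_j$. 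A generic perturbation of the $g_i^{(j)}$ within the dense $k$-rational locus of $A_i^{(j)\circ}$ avoids the countably many length coincidences, yielding uniqueness.

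The final step is to arrange $\ell_1 = \cdots = \ell_m =: \epsilon_m$ and $\mathrm{vol}(M^{(1)}) = \cdots = \mathrm{vol}(M^{(m)}) =: V$. For a common length, since $f_1$ can be kept independent of $j$, the dense subgroup $L_1 \subset \mathbb{R}_{>1}$ of achievable leading eigenvalues $\lambda_1^{(j)}$ is the same for all $j$; exploiting the density of both $L_1$ and each $L_2^{(j)}$ (the analogous subgroup for $A_2^{(j)\circ}(k)$), one picks a suitable common target $e^{\epsilon_m/2} < e^{1/(2m)}$ realized as a nontrivial product $\lambda_1^{(j)} \lambda_2^{(j)}$ in each $L_1 \cdot L_2^{(j)}$ with both factors greater than $1$. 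For a common volume, pass to finite covers of each $M^{(j)}$ whose defining subgroups contain the cyclic group generated by $\omega^{(j)}$; since lifts of closed geodesics have length at least the original length and attain it precisely when the deck translation lies in the cover's subgroup, such covers preserve both the length and the uniqueness of the shortest geodesic. Because the arithmetic source manifolds $M_i^{(j)}$ have rationally commensurable volumes by Prasad's formula (all arising from forms over the same field $k$), a compatible set of cover degrees exists realizing a common total volume $V$.

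The main obstacle I foresee is verifying the simultaneous compatibility of these tunings: that a common length value in $\bigcap_j (L_1 \cdot L_2^{(j)})$ admits nontrivial decompositions realizable by $k$-rational $g_i^{(j)}$, and that the cover degrees needed to equalize volumes can be chosen to preserve both uniqueness of the shortest geodesic and the bound $\epsilon_m < 1/m$. Both points are expected to be handled by the abundance of flexibility in the parameters $f_2^{(j)}$, $g_i^{(j)}$, and the level ideal $I$, together with the residual finiteness and separability properties of $\pi_1(M^{(j)})$ already invoked in the proof of Theorem~\ref{main}.
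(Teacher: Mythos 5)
Your approach diverges substantially from the paper's, and several of the steps you flag as ``expected to be handled'' are in fact the crux of the matter and do not go through as sketched.

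First, uniqueness. You assert that any closed geodesic in the hybrid either lies in an ``arithmetic piece'' (and so has length $\geq \delta$) or ``crosses the hypersurfaces and so has length $\geq 2\ell_j$ by a crossing-length bound analogous to Remark~\ref{classicalhybrid}.'' This is not correct. Remark~\ref{classicalhybrid} bounds the length of crossing arcs by half the systole of the \emph{arithmetic double} $2O_i$; in the present construction the pieces $N_i$ have \emph{two} cut hypersurfaces $\Sigma_i, \Sigma_i'$, and the whole point of the inbreeding construction is that the orthogeodesic $\omega_i$ joining them is very short. A priori there could be many other short orthogeodesics in $N_i$ with endpoints on $\Sigma_i \cup \Sigma_i'$, and a closed geodesic of $M$ could traverse one of those instead of $\omega_i$. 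Ruling these out is exactly where the paper does real work: it controls the Dirichlet domains of the reflection subgroups $\Phi_i(J)$, invokes separability of the geometrically finite $\Phi_i(J)$ via Bergeron--Haglund--Wise to pass to a further finite cover $\Lambda_i \backslash \mathbb{H}_{f_i}$, and thereby forces every orthogeodesic other than $\omega_i$ to have length $\geq \epsilon_{2,n}$. ``Generic perturbation of the $g_i^{(j)}$'' does not substitute for this: perturbing $g_i$ changes the entire manifold, and there is no evident reason why a generic choice avoids competing short orthogeodesics.

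Second, equalizing volumes by passing to finite covers is not compatible with preserving a \emph{unique} shortest geodesic. If $\Gamma' < \Gamma$ is a proper finite-index subgroup containing the primitive loxodromic $\gamma$ representing $\omega^{(j)}$, the preimage of that geodesic in $\Gamma' \backslash \mathbb{H}^n$ is a disjoint union of closed geodesics whose lengths are positive multiples of $\ell_j$, and in general more than one of them has length exactly $\ell_j$ (any $\Gamma$-conjugate of $\gamma$ lying in $\Gamma'$ and not $\Gamma'$-conjugate to $\gamma$ yields another such geodesic). Uniqueness is therefore lost unless $\Gamma'$ is chosen with great care, and nothing in your argument controls this.

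Third, your length equalization requires, for each $j$, a common target eigenvalue $\lambda$ such that $\lambda$ is simultaneously achievable as $\lambda_1 \lambda_2^{(j)}$ with $\lambda_1 \in L_1$, $\lambda_2^{(j)} \in L_2^{(j)}$ and both $>1$. The subgroups $L_2^{(j)} \subset \mathbb{R}_{>1}$ are $k$-points of pairwise non-isomorphic anisotropic $k$-tori (the norm-one tori attached to the binary forms $a_j x_1^2 - \sqrt{2} x_{n+1}^2$), so the sets $L_1 \cdot L_2^{(j)}$ have no reason to have a common element other than $1$. Density of each individual set does not give density of the intersection, which is what you actually need.

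The paper sidesteps all three issues at once by \emph{not} varying $a$. It fixes $a = 17$ (so that $f_1, f_2$ are inequivalent over $k$ by Gelander--Levit) and, after passing to the further covers $\Lambda_i \backslash \mathbb{H}_{f_i}$ that enforce uniqueness, builds closed manifolds $C$ out of $2^m$ copies of the doubles $N_1', N_2'$ glued cyclically so that the $\omega_i'$ concatenate. All such $C$ with the same multiset of pieces automatically share the same volume and the same (unique) systole; pairwise incommensurability then follows from Raimbault's rigidity theorem for cyclic hybrids, which says commensurable $C$'s must have cyclic gluing patterns related by a dihedral symmetry. Exhibiting $m$ balanced binary cyclic words of length $2^m$ in distinct dihedral orbits finishes the proof. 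Your idea of using distinct adjoint trace fields $k(\sqrt{a_j})$ does give incommensurability cheaply, but at the cost of making length and volume matching genuinely hard; the paper's single-field, many-gluings approach is what makes the synchronization trivial.
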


\begin{proof}
Fix $n$ and $m$. In the proof of Theorem \ref{main}, take $\epsilon \leq 2^{-m}\min\{\frac{1}{m}, \epsilon_{2,n}\}$, where~$\epsilon_{2,n}$ is as in Remark \ref{classicalhybrid}. For $i=1,2$, let $p_i \in H_i$ be the point at which the axis of $g_i$ meets $H_i$, and let $q_i$ be the midpoint of the geodesic segment $\tilde{\omega}_i \subset \mathbb{H}_{f_i}$. For a point $y_i \in \mathbb{H}_{f_i}$ and a subgroup $\Delta_i < \Gamma_i$, let $D_i(\Delta_i, y_i)$ be the Dirichlet domain for~$\Delta_i$ in $\mathbb{H}_{f_i}$ centered at $y_i$. Given an ideal $J \subset \mathbb{Z}[\sqrt{2}]$, denote by $\Delta_i(J)$ (resp., $\Delta_i'(J)$) the stabilizer in $\Gamma_i(J)$ of $H_i$ (resp., $gH_i$), and let $\Phi_i(J) = \langle \Delta_i(J), \Delta_i'(J) \rangle < \Gamma_i(J)$. We pass to an ideal $J \subset I$ such that
\begin{enumerate}[label=(\roman*)]
\item\label{gf} the walls of $D_i(\Delta_i(J), p_i)$  and the walls of $D_i(\Delta_i'(J), gp_i)$ together comprise the walls of $D_i(\Phi_i(J), q_i)$;

\item\label{bananas} no wall of $D_i(\Delta_i(J), p_i)$ enters the $\epsilon_{2,n}$-neighborhood of $gH_i$ in $\mathbb{H}_{f_i}$, and no wall of $D_i(\Delta_i'(J), gp_i)$ enters the $\epsilon_{2,n}$-neighborhood of $H_i$.
\end{enumerate}
From \ref{gf}, we have that $\Phi_i(J)$ is geometrically finite, and hence separable in $\Gamma_i(J)$ by aforementioned work of Bergeron--Haglund--Wise \cite{bergeron2011hyperplane}. It follows that there is a finite-index subgroup $\Lambda_i < \Gamma_i(J)$ containing $\Phi_i(J)$ such that no wall of $D_i(\Lambda_i, q_i)$ that is not a wall of $D_i(\Phi_i(J), q_i)$ enters the $\epsilon_{2,n}$-neighborhood of $H_i \cup gH_i$. We replace~$M_i$ with the manifold $\Lambda_i \backslash \mathbb{H}_{f_i}$.
Then any orthogeodesic segment in~$M_i$ with endpoints on $\Sigma_i \cup \Sigma_i'$ apart from $\omega_i$ now has length $\geq \epsilon_{2,n}$.

Let $N_i'$ be the double of $N_i$ across all boundary components of $N_i$ apart from~$\Sigma_i$, and $\omega_i' \subset N_i'$ the double of $\omega_i$. Then any connected closed manifold $C$ built out of a total of~$2^m$ copies of the~$N_i'$ by gluing boundary components via isometries so that the copies of $\omega_i'$ match up has a single shortest closed geodesic, namely, the closed geodesic obtained by gluing up all the copies of $\omega_i'$, and this geodesic has length $< 2^m\frac{\epsilon}{2} < \frac{1}{m}$. Moreover, the systole and volume of such $C$ depend only on the number of times each of $N_1'$ and~$N_2'$ features in the construction of $C$.

Now we claim that if we set $a = 17$ in the proof of Theorem \ref{main}, then the manifolds~$C$ as above represent at least $m$ distinct commensurability classes, even under the stipulation that $N_1'$ and $N_2'$ feature equally in the construction of $C$. Indeed, for our choice of $a$, Gelander and Levit \cite{MR3261631} show that the forms $f_1$ and $f_2$ are not similar over~$k$. Since $n \geq 3$, it then follows from work of Raimbault\footnote{Raimbault's work \cite{MR3090707} deals with manifolds constructed cyclically out of pieces of {\em arithmetic} manifolds the ambient groups of whose associated lattices are distinct. However, his arguments persist if ``arithmetic'' is weakened to ``quasi-arithmetic'' in the previous sentence, as observed in~\cite{bdr}, and as is the case in our context.} \cite{MR3090707} that if~$C_1$ and $C_2$ are commensurable manifolds of the form~$C$ above, and $\alpha_j$ is the cyclic sequence of $1$'s and $2$'s dictating the decomposition of $C_j$ into copies of $N_1'$ and $N_2'$ for $j=1,2$, then $\alpha_1$ and $\alpha_2$ are related by a dihedral symmetry. Now it suffices to observe that one can easily produce $m$ cyclic sequences of $1$'s and $2$'s of length~$2^m$ no two of which are related by such a symmetry and each of which features an equal number of $1$'s and $2$'s.
\end{proof}

\begin{remark}
In the proof of Theorem \ref{main}, if there is no other orthogeodesic in $M_i = \Gamma_i(I) \backslash \mathbb{H}_{f_i}$ joining $\Sigma_i$ to $\Sigma_i'$ of precisely the same length as $\omega_i$, then the argument of Belolipetsky and Thomson \cite{MR2821431} (more precisely, the proof of their generalized Margulis--Vinberg lemma \cite[Lem.~3.1]{MR2821431}) in fact shows that, for any $\delta > 0$, it is possible to pass to an ideal $I_\delta \subset I \subset \mathbb{Z}[\sqrt{2}]$ such that, replacing $M_i$ with $\Gamma_i(I_\delta) \backslash \mathbb{H}_{f_i}$, every orthogeodesic in $M_i$ with endpoints on $\Sigma_i \cup \Sigma_i'$ apart from $\omega_i$ now has length $\geq \delta$. Taking $\delta = \epsilon_{2,n}$, one could then instead use the above congruence covers $M_i$ in the proof of Theorem \ref{shortest}. It has been suggested to us by Misha Belolipetsky that this can potentially be arranged for an appropriate (indeed, generic) choice of $g_i$, but we have not pursued this here.
\end{remark}

\subsection*{Acknowledgements} I am grateful to Misha Belolipetsky, Nikolay Bogachev, and Jean Raimbault for instructive discussions, and for their comments on an earlier draft of this note. The author was supported by the Huawei Young Talents Program.

\bibliography{bantingbib}{}
\bibliographystyle{siam}

\end{document}